\numberwithin{equation}{section}
\newtheorem{lemma}{Lemma}[section]
\newtheorem{prop}[lemma]{Proposition}
\newtheorem{thm}[lemma]{Theorem}
\newtheorem{cor}[lemma]{Corollary}
\theoremstyle{definition}
\newtheorem{conjecture}[lemma]{Conjecture}
\theoremstyle{remark}
\def\T{\mathbb{T}}
\def\R{\mathbb{R}}
\def\N{\mathbb{N}}
\def\bo{\boldsymbol{\omega}}
\numberwithin{equation}{section} \numberwithin{table}{section}
\title{New dimension bounds for $\alpha\beta$ sets}
\author{Simon Baker\\ \\
\emph{School of Mathematics,} \\ \emph{University of Birmingham,} \\ \emph{Birmingham,  B15 2TT, UK.} \\ Email: simonbaker412@gmail.com\\}
\date{\today}
\begin{document}
\maketitle

\begin{abstract}
In this paper we obtain new lower bounds for the upper box dimension of $\alpha\beta$ sets. As a corollary of our main result, we show that if $\alpha$ is not a Liouville number and $\beta$ is a Liouville number, then the upper box dimension of any $\alpha\beta$ set is $1$. We also use our dimension bounds to obtain new results on affine embeddings of self-similar sets.\\
%In this paper we give sufficient conditions for a self-similar measure $\mu,$ so that for $\mu$ almost every $x$ the sequence $(x^n)_{n=1}^{\infty}$ is uniformly distributed modulo one. 

\noindent \emph{Mathematics Subject Classification 2010}: 11J04, 11K06, 28A80.\\

\noindent \emph{Key words and phrases}: $\alpha\beta$ sets, Diophantine approximation, Self-similar sets.

\end{abstract}

\section{Introduction}
Let $\mathbb{T}:=\mathbb{R}/\mathbb{Z}$ denote the unit circle. Given $\alpha,\beta \in \R\setminus \mathbb{Q},$ a non-empty closed set $E\subset \T$ is called an $\alpha\beta$ set if for all $x\in E$ either $x+\alpha \mod 1\in E$ or $x+\beta \mod 1\in E$. A sequence $(x_n)_{n\geq 0}$ of points in $\T$ is called an $\alpha\beta$ orbit if for all $n\geq 0,$ either $x_{n+1}-x_{n}=\alpha \mod 1$ or $x_{n+1}-x_{n}=\beta \mod 1.$ Clearly any $\alpha\beta$ set contains an $\alpha\beta$ orbit. If $\alpha$ and $\beta$ are rationally dependent modulo one, i.e. there exists $n_1,n_2\in\mathbb{Z}$ such that $n_{1}\alpha+n_{2}\beta=0 \mod 1,$ then using the well known fact that orbits of irrational circle rotations are dense in $\T$ together with the Baire category theorem, it can be shown that every $\alpha\beta$ set has non-empty interior (see \cite[Theorem 1.5(i)]{FengXiong}). This observation naturally leads to the following question that was posed by Engelking in \cite{Eng}: Suppose that $\alpha$ and $\beta$ are rationally independent modulo one, do there exist nowhere dense $\alpha\beta$ sets? This question was answered by Katznelson in \cite{Kat}. He proved that if $\alpha$ and $\beta$ are rationally independent, then there do exist nowhere dense $\alpha\beta$ sets. Katznelson also proved that $\alpha\beta$ sets exist with arbitrarily small Hausdorff dimension. Interest in $\alpha\beta$ sets was renewed in a recent paper of Feng and Xiong \cite{FengXiong}. In this paper they connected $\alpha\beta$ sets and their higher dimensional analogues\footnote{Instead of just considering two elements $\alpha,\beta\in \mathbb{R}\setminus\mathbb{Q},$ one can consider $\alpha_{1},\ldots,\alpha_{n}\in\mathbb{R}\setminus\mathbb{Q}$ and then define appropriate analogues of $\alpha\beta$ sets and $\alpha\beta$ orbits.} to the existence of affine embeddings of self-similar sets. They proved that if $\alpha$ and $\beta$ are rationally independent then any $\alpha\beta$ set $E$ satisfies $E-E=\T$ or $E$ has non-empty interior. This result implies that if $\alpha$ and $\beta$ are rationally independent then any $\alpha\beta$ set $E$ satisfies $\underline{\dim}_{B}E\geq 1/2$. Further results on the dimension of $\alpha\beta$ sets and their higher dimensional analogues were obtained by Yu in \cite{Yu}. In this paper Yu conjectured that for rationally independent $\alpha$ and $\beta,$ any $\alpha\beta$ set $E$ satisfies $\dim_{B}E=1$\footnote{This conjecture was formulated in \cite{Yu} in terms of the lower box dimension. Our formulation is easily seen to be equivalent.}. In this paper we obtain new lower bounds for the upper box dimension of $\alpha\beta$ sets. These bounds depend upon the Diophantine properties of $\alpha$ and $\beta$. As a corollary of our main result, we give the first examples of $\alpha$ and $\beta$ satisfying the conclusion of Yu's conjecture where box dimension is replaced with upper box dimension. We conclude this introductory section by mentioning a paper of Chen, Wang, and Wen \cite{CWW} who considered random analogues of $\alpha\beta$ orbits. They proved that such sequences were almost surely uniformly distributed modulo one, and that the exponential sums along the orbit have square root cancellation.

\subsection{Statement of results} 
A well known theorem due to Dirichlet states that for any $x\in \R$ and $Q>1,$ there exists integers $p$ and $q$ such that $1\leq q\leq Q$ and $$\left|x-\frac{p}{q}\right|< \frac{1}{qQ}.$$ This implies that if $x$ is an irrational number, then the inequality $$\left|x-\frac{p}{q}\right|< \frac{1}{q^2}$$ has infinitely many solutions in integers $p$ and $q$. 
Given $\tau\geq 2$ we say that $x\in\R\setminus \mathbb{Q}$ is $\tau$-well approximable if there exists infinitely many $(p,q)\in \mathbb{Z}\times \mathbb{N}$ satisfying $$\left|x-\frac{p}{q}\right|< \frac{1}{q^{\tau}}.$$ We denote the set of $\tau$-well approximable numbers by $W(\tau)$. For $x\in \R\setminus \mathbb{Q}$ we define the exact order of $x$ to be $$\tau(x):=\sup\{\tau:x\in W(\tau)\}.$$ If $\tau(x)=\infty$ then we say that $x$ is a Liouville number. For $\tau\in[2,\infty)\cup \{\infty\}$ we denote the set of real numbers with exact order $\tau$ by $E(\tau)$. Equipped with these definitions we are now able to state the main result of this paper.

\begin{thm}
	\label{Main theorem}
Let $\tau_1,\tau_2\geq 2$ satisfy $2\tau_1<\tau_2 +2$ and suppose that $\alpha\in E(\tau_{1})$ and $\beta\in W(\tau_2).$ Then any $\alpha\beta$ orbit $(x_n)_{n\geq 0}$ satisfies $\overline{\dim}_{B}(\{x_n\})\geq 1- \frac{2(\tau_1-1)}{\tau_{2}}.$ 
\end{thm}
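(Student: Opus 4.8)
The plan is to deduce the bound from a count of how many distinct intervals of a suitable scale the orbit must meet, after first renormalising so that the two Diophantine inputs act at comparable scales. Set $s=1-\frac{2(\tau_1-1)}{\tau_2}$. Writing $x_n=x_0+a_n\alpha+b_n\beta\bmod 1$, where $(a_n,b_n)$ is the monotone staircase in $\mathbb{Z}_{\ge0}^2$ recording how many $\alpha$- and $\beta$-steps have been taken, the orbit visits $n+1$ distinct lattice points $(a_n,b_n)$. If only finitely many steps are of one type, then a tail of the orbit is the orbit of a single irrational rotation, hence equidistributed, so its closure is $\T$ and $\overline{\dim}_B=1\ge s$; thus I may assume both step types occur infinitely often. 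I would also use the packing formulation of upper box dimension: it suffices to produce, along a sequence of scales $\delta_k\to0$, at least $\delta_k^{-s+o(1)}$ distinct $\delta_k$-intervals meeting the orbit.

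The key device is renormalisation by the good denominators of $\beta$. Since $\beta\in W(\tau_2)$ there are infinitely many $q$ with $\|q\beta\|<q^{1-\tau_2}$, where $\|\cdot\|$ denotes distance to the nearest integer; set $\eta:=\|q\beta\|$. The $q$-fold covering map $\phi_q(x)=qx\bmod1$ preserves $\overline{\dim}_B$ (it is locally a homothety of ratio $q$, so $N_\delta(\phi_q E)$ and $N_{\delta/q}(E)$ differ by at most a factor $q$, which is invisible to the dimension). Under $\phi_q$ the orbit becomes a $\{q\alpha,\,q\beta\}$-orbit whose $\beta$-increment satisfies $\|q\beta\|=\eta<q^{1-\tau_2}$, i.e. it lies within $\eta$ of $0$. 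I then work at a scale $\delta$ equal to a suitable power of $q$ and count occupied $\delta$-intervals for this renormalised orbit.

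The heart of the matter is anti-concentration. Two renormalised orbit points lie in a common $\delta$-interval only if their monotone displacement $(u,v)$, bounded by the current step counts, satisfies $\|u\,q\alpha+v\,q\beta\|<C\delta$. The number of orbit points in one interval is therefore at most the length of a monotone chain of such near-returns, hence is controlled by the minimal step-cost $u+v$ of a nontrivial near-return. Here $\alpha\in E(\tau_1)$ enters decisively: the exact-order lower bound $\|m\alpha\|\gg m^{-(\tau_1-1)-\varepsilon}$ forces any near-return to be expensive, and balancing the cost of cancelling an $\alpha$-displacement against the size $\eta$ of the $\beta$-drift, then optimising $\delta$ against $q$ and selecting $q$ along the sequence above, is designed to yield $\gg q^{\tau_2-2(\tau_1-1)+o(1)}$ occupied intervals at scale $q^{-\tau_2}$, whence $\overline{\dim}_B(\{x_n\})\ge s$.

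The step I expect to be the main obstacle is precisely this multiscale optimisation in the balanced regime, where the numbers of $\alpha$- and $\beta$-steps are comparable. A crude pigeonhole on the lattice points $(a_n,b_n)$ recovers only $\overline{\dim}_B\ge\tfrac12$ (the Feng–Xiong bound), because in a symmetric box the number of near-returns is forced to be of the order of the box volume; beating $\tfrac12$ genuinely requires using the renormalised tiny $\beta$-increment together with the exact-order lower bound for $\alpha$ to rule out cheap near-returns, while keeping careful track of how the intermediate points of each near-return loop themselves spread. The arithmetic of this trade-off is where the two exponents interact, and the hypothesis $2\tau_1<\tau_2+2$ is exactly the condition under which the resulting exponent $1-\frac{2(\tau_1-1)}{\tau_2}$ is positive; the $o(1)$ losses coming from the $\varepsilon$ in the exact-order bound are to be absorbed by letting $q\to\infty$ along the chosen sequence of denominators.
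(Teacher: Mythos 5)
Your high-level strategy (a balanced/unbalanced dichotomy, then producing many separated orbit points at a scale tied to the good denominators $q$ of $\beta$, with $\epsilon$-losses absorbed along $q\to\infty$) matches the paper, but the step you yourself flag as the main obstacle is exactly where the proof lives, and the mechanism you sketch for it would fail. Bounding the number of orbit points per $\delta$-interval by the minimal step-cost $u+v$ of a nontrivial near-return collapses because $\eta=\|q\beta\|$ has no lower bound (only the upper bound $q^{1-\tau_2}$): whenever $\eta<\delta$, a single renormalised $\beta$-step is itself a near-return of cost $1$, so the chain-length bound is vacuous. The paper's count is organised differently: for each $\alpha$-multiplicity $j$ it takes the first orbit point $\alpha j+\beta k_j$, pigeonholes the $k_j$ into residue classes modulo $q_l$, and within a popular class gets $\|\beta(k_j-k_{j'})\|\le q_l'/q_l^{\tau_2}$ precisely because $q_l\mid(k_j-k_{j'})$ annihilates the rational part $p_l/q_l$ of $\beta$. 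Your multiplication by $q$ is the natural renormalised analogue of this, but it loses a factor you cannot afford: if $\|qx-qy\|\ge\delta$ upstairs, downstairs one only gets $\|x-y\|\ge\delta/q$, so a count of $q'/q$ points separated at scale $1/q'$ upstairs yields dimension $\frac{\log(q'/q)}{\log(qq')}$ rather than the paper's $\frac{\log(q'/q)}{\log q'}$; with $q'$ in the worst admissible window this gives roughly $1-\frac{4(\tau_1-1)}{\tau_2+2(\tau_1-1)}$, strictly short of $1-\frac{2(\tau_1-1)}{\tau_2}$. The residue-class pigeonhole, working with the unrenormalised orbit, is what keeps the separation at scale $1/q_l'$.

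The second missing idea is how $\alpha\in E(\tau_1)$ actually enters. The generic exact-order bound $\|m\alpha\|\gg m^{-(\tau_1-1)-\varepsilon}$ that you invoke is too weak to reach the stated exponent; the paper instead proves (Lemma \ref{interval lemma}) that exact order $\tau_1$ forces every interval $[Q,Q^{\tau_1+\epsilon-1}]$, $Q$ large, to contain a convergent denominator $q'$ of $\alpha$, and then uses the best-approximation property \eqref{property3} to get the uniform separation $\|m\alpha\|\ge\frac{1}{2q'}$ for all $0<|m|<q'$ — much stronger than $|m|^{-(\tau_1-1)}$ when $|m|$ is near $q'$. Choosing $q'$ in the window $\bigl[q_l^{(\tau_2-2\epsilon)/(2(\tau_1+\epsilon-1))},q_l^{(\tau_2-2\epsilon)/2}\bigr]$ is where the two exponents genuinely interact and where $2\tau_1<\tau_2+2$ is used (not merely to make the final bound positive). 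Finally, your opening dichotomy is not the right one: ``both step types occur infinitely often'' is compatible with $|\bo|_{\alpha,N}=o(N)$, in which case your scheme has no guarantee of enough distinct $\alpha$-counts within the time horizon. The correct split is between uniform comparability $|\bo|_{\alpha,N}\asymp|\bo|_{\beta,N}$ for all large $N$ (needed for the count) and its negation, which forces arbitrarily long runs of a single rotation and hence density — a fixable point, but one your argument currently leaves open alongside the central counting step.
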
Theorem \ref{Main theorem} immediately implies the following result.
%Note that Theorem \ref{Main theorem} only provides a better lower bound for the upper box dimension of an $\alpha\beta$ set then that which is already known when $\tau_1$ and $\tau_2$ also satisfy $4\tau_{1}<\tau_{2}+4$. 

\begin{cor}
	\label{Corollary}
	Assume that $\alpha$ is not a Liouville number and $\beta$ is a Liouville number. Then any $\alpha\beta$ orbit $(x_n)_{n\geq 0}$ satisfies $\overline{\dim}_{B}(\{x_n\})=1.$ 
\end{cor}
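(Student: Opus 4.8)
The plan is to derive Corollary~\ref{Corollary} directly from Theorem~\ref{Main theorem}. Since $\alpha$ is not a Liouville number, its exact order $\tau(\alpha)$ is finite; write $\alpha\in E(\tau_1)$ with $\tau_1:=\tau(\alpha)<\infty$. Since $\beta$ is a Liouville number, $\tau(\beta)=\infty$, which means $\beta\in W(\tau)$ for every $\tau\geq 2$; in particular I am free to choose the parameter $\tau_2$ in Theorem~\ref{Main theorem} as large as I like while keeping $\beta\in W(\tau_2)$.

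\medskip

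\noindent The key observation is that for any fixed finite $\tau_1$, the hypotheses $\tau_1,\tau_2\geq 2$ and $2\tau_1<\tau_2+2$ of Theorem~\ref{Main theorem} are satisfied once $\tau_2$ is taken large enough (any $\tau_2>2\tau_1-2$ works). So I would fix such a large $\tau_2$, apply Theorem~\ref{Main theorem}, and conclude that any $\alpha\beta$ orbit $(x_n)_{n\geq 0}$ satisfies
$$\overline{\dim}_{B}(\{x_n\})\geq 1-\frac{2(\tau_1-1)}{\tau_2}.$$
Now I would let $\tau_2\to\infty$. Because $\tau_1$ is a fixed finite constant, the correction term $\frac{2(\tau_1-1)}{\tau_2}$ tends to $0$, so the right-hand side tends to $1$. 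The crucial point that makes this limiting argument legitimate is that the orbit $(x_n)_{n\geq 0}$, and hence its upper box dimension, is a single fixed quantity that does not depend on the auxiliary parameter $\tau_2$: the lower bound holds simultaneously for every admissible $\tau_2$, so the dimension is bounded below by the supremum over all such $\tau_2$, which is $1$. Since the upper box dimension of a subset of $\T$ is trivially at most $1$, equality follows.

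\medskip

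\noindent I do not anticipate a genuine obstacle here, as the corollary is a soft consequence of the main theorem. The only subtlety worth stating explicitly is that $\beta$ being Liouville gives membership $\beta\in W(\tau_2)$ for arbitrarily large $\tau_2$ simultaneously, and that the single orbit is being tested against all of these; this is exactly what lets one take the supremum of the bounds rather than being stuck with a single value of $\tau_2$. I would present the argument by first recording that $\tau_1<\infty$ and $\beta\in\bigcap_{\tau\geq 2}W(\tau)$, then applying the theorem and passing to the limit.
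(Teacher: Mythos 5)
Your proposal is correct and is exactly the argument the paper intends: the paper states that Theorem \ref{Main theorem} ``immediately implies'' Corollary \ref{Corollary}, and the implicit reasoning is precisely your observation that $\tau_1=\tau(\alpha)<\infty$ is fixed while $\beta\in W(\tau_2)$ for every $\tau_2\geq 2$, so one lets $\tau_2\to\infty$ in the bound $1-\frac{2(\tau_1-1)}{\tau_2}$ and uses the trivial upper bound of $1$. Your explicit handling of the subtleties (that $\alpha\in E(\tau_1)$ by definition of exact order, that the Liouville condition gives $\beta\in\bigcap_{\tau\geq 2}W(\tau)$, and that the orbit is fixed independently of the auxiliary parameter) is sound and matches the paper.
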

Since every $\alpha\beta$ set contains an $\alpha\beta$ orbit, we immediately see that suitable analogues of Theorem \ref{Main theorem} and Corollary \ref{Corollary} also hold for $\alpha\beta$ sets. We emphasise that the $\alpha$ and $\beta$ appearing in the statements of Theorem \ref{Main theorem} and Corollary \ref{Corollary} are rationally independent. This is because any rationally dependent $\alpha$ and $\beta$ must have the same exact order.

The rest of this paper is structured as follows. In Section \ref{prelimiaries} the relevant definitions from Fractal Geometry are given and we gather some useful results from the theory of continued fractions. In Section \ref{proof section} we prove Theorem \ref{Main theorem}. In Section \ref{applications} we apply Theorem \ref{Main theorem} to obtain a result on affine embeddings of self-similar sets.
\section{Preliminaries}
\label{prelimiaries}
\subsection{Dimension theory}
Let $F\subset \mathbb{R}^n$ and $s\geq 0$. Given $\delta>0$ we define $$\mathcal{H}_{\delta}^{s}(F):=\inf \left\{\sum_{i=1}^{\infty}Diam(U_i)^{s}:\{U_i\} \textrm{ is a }\delta\textrm{-cover of } F\right\}.$$ We define the $s$-dimensional Hausdorff measure of $F$ to be $$\mathcal{H}^{s}(F):=\lim_{\delta\to 0}\mathcal{H}_{\delta}^{s}(F).$$ The Hausdorff dimension of $F$ is given by $$\dim_{H}(F):=\inf\{s\geq 0:\mathcal{H}^{s}(F)=0\}=\sup\{s\geq 0:\mathcal{H}^{s}(F)=\infty\}.$$ Given a bounded set $F\subset \mathbb{R}^n,$ we let $N(F,r)$ denote the minimum number of closed balls of radius $r$ required to cover $F$. The upper box dimension of a bounded set $F$ is defined to be $$\overline{\dim_{B}}(F):=\limsup_{r\to 0}\frac{\log N(F,r)}{-\log r}.$$ The lower box dimension is defined similarly using liminf instead of limsup. When the lower and upper box dimensions coincide we refer to the common value as the box dimension and denote it by $\dim_{B}(F)$. For more on dimension theory and fractal sets we refer the reader to \cite{Fal}.

\subsection{Continued fractions}
Proofs of the properties stated below can be found in the books \cite{Bug} and \cite{Cas}.  

For any $x\in [0,1]\setminus \mathbb{Q},$ there exists a unique sequence $(a_{n})_{n\geq 1}\in\mathbb{N}^{\mathbb{N}}$ such that 
$$ x=\cfrac{1}{a_1+\cfrac{1}{a_2 +\cfrac{1}
		{a_3 + \cdots }}}.$$
We call the sequence $(a_n)$ the continued fraction expansion of $x$. Suppose $x$ has continued fraction expansion $(a_n),$ then for each $n\geq 1$ we let
$$ \frac{p_n}{q_n}:=\cfrac{1}{a_1+\cfrac{1}{a_2 +\cfrac{1}
		{a_3 + \cdots \cfrac{1}
			{a_n }}}}.$$ The fraction $p_n/q_n$ is called the $n$-th partial quotient of $x$. For any $x\in [0,1]\setminus \mathbb{Q},$ its sequence of partial quotients satisfies the following properties:
\begin{itemize}
	\item If we set $p_{-1}=1, q_{-1}=0, p_0=0, q_0=1$, then for any $n\geq 1$ we have 
\begin{align}
\label{property2}
p_n&=a_n p_{n-1}+p_{n-2}\\
q_n&=a_n q_{n-1}+q_{n-2}. \nonumber
\end{align}
	\item For any $n\geq 1$ we have 
	\begin{equation}
	\label{property1}\frac{1}{q_n(q_{n+1}+q_n)}<\left|x-\frac{p_n}{q_n}\right|<\frac{1}{q_nq_{n+1}}.
	\end{equation}

	\item If $q< q_{n+1}$ then \begin{equation}
	\label{property3}
	|qx-p|\geq |q_{n}x-p_n|
	\end{equation}for any $p\in\mathbb{Z}$.
\end{itemize} For $x\in \mathbb{R}$ we will on occasion use $\|x\|$ to denote the distance from $x$ to the nearest integer. 

We will use the following lemma in our proof of Theorem \ref{Main theorem}.
\begin{lemma}
	\label{interval lemma}
Let $x\in E(\tau)$ for some $\tau\geq 2.$ Then for any $\epsilon>0$, for all $q\in \R$ sufficiently large the interval $[q,q^{\tau+\epsilon-1}]$ contains the denominator of some partial quotient of $x$.  
\end{lemma}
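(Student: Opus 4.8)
The plan is to convert the hypothesis $x\in E(\tau)$ into an upper bound on how fast consecutive denominators $q_n$ of the continued fraction of $x$ can grow, and then to argue that with such a bound the denominators cannot skip over the interval $[q,q^{\tau+\epsilon-1}]$. Since both well-approximability and the exact order depend only on the fractional part of $x$, I would assume without loss of generality that $x\in(0,1)\setminus\mathbb{Q}$ and work with the denominators $(q_n)$ arising in its continued fraction expansion.

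The key step is to show that \emph{for every $\epsilon'>0$ there is an $N$ with $q_{n+1}<q_n^{\tau-1+\epsilon'}$ for all $n\geq N$.} I would prove this by contraposition: if $q_{n+1}\geq q_n^{\tau-1+\epsilon'}$ held for infinitely many $n$, then the upper bound in \eqref{property1} would give
$$\left|x-\frac{p_n}{q_n}\right|<\frac{1}{q_nq_{n+1}}\leq\frac{1}{q_n^{\tau+\epsilon'}}$$
for each such $n$, exhibiting infinitely many solutions of $|x-p/q|<1/q^{\tau+\epsilon'}$ and hence placing $x$ in $W(\tau+\epsilon')$. This would force $\tau(x)\geq\tau+\epsilon'>\tau$, contradicting $x\in E(\tau)$. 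This is the only place where the Diophantine hypothesis enters, and I expect it to be the conceptual heart of the argument, even though the contrapositive keeps it short.

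Given the growth bound, the conclusion follows from a covering argument. For $q$ large, let $n$ be the largest index with $q_n<q$; this exists because $q_m\to\infty$, and $n\to\infty$ as $q\to\infty$, so for all sufficiently large $q$ we may assume $n\geq N$ with the choice $\epsilon'=\epsilon$. Maximality of $n$ gives $q_{n+1}\geq q$, while the growth bound together with $q_n<q$ gives
$$q_{n+1}<q_n^{\tau-1+\epsilon}<q^{\tau-1+\epsilon}=q^{\tau+\epsilon-1}.$$
Therefore $q_{n+1}\in[q,q^{\tau+\epsilon-1}]$, which is a denominator of a partial quotient of $x$ lying in the required interval. The only bookkeeping point is checking that the index $n$ attached to a given $q$ is eventually at least $N$; this is immediate since $q\to\infty$ forces $n\to\infty$.
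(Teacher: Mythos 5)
Your proof is correct and takes essentially the same approach as the paper: a gap in the sequence of convergent denominators forces the convergent just below the gap to approximate $x$ to order $\tau+\epsilon$, contradicting $x\in E(\tau)$. The only difference is organizational and cosmetic --- you package the Diophantine input as an eventual growth bound $q_{n+1}<q_n^{\tau-1+\epsilon}$ and use the upper bound in \eqref{property1} directly, whereas the paper argues at a single gap and detours through the recurrence \eqref{property2} and the partial quotient $a_{n^*+1}$, a step your version shows is unnecessary.
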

\begin{proof}
Let $(q_n)_{n=1}^{\infty}$ denote the sequence of denominators of partial quotients of $x$ written in increasing order. Suppose $q>q_{1}$ is such that the interval $[q,q^{\tau+\epsilon-1}]$ does not contain the denominator of a partial quotient of $x$. Then let $n^{*}\geq 1$ be the unique integer satisfying 
\begin{equation}
\label{n equation}
q_{n^{*}}<q\quad \textrm{and}\quad q_{n^{*}+1}>q^{\tau+\epsilon-1}.
\end{equation}
 Equation \eqref{property2} implies that 
 \begin{equation}
 \label{a_n equation}
 q_{n+1}\leq 2a_{n+1}q_{n}
 \end{equation} for all $n\geq 1$. Combining \eqref{n equation} and \eqref{a_n equation} we have 
\begin{equation}
\label{a q equation}
2a_{n^{*}+1}\geq \frac{q_{n^{*}+1}}{q_{n^{*}}}>q^{\tau+\epsilon-2}>q_{n^{*}}^{\tau+\epsilon-2}.
\end{equation} Equations \eqref{property2} and \eqref{property1} imply that 
\begin{equation}
\label{placeholder}
\left|x-\frac{p_n}{q_n}\right|\leq \frac{1}{a_{n+1}q_{n}^{2}}
\end{equation}for all $n\geq 1$. It now follows from \eqref{a q equation} and \eqref{placeholder} that \begin{equation}
\label{finite solutions}\left|x-\frac{p_{n^{*}}}{q_{n^{*}}}\right|\leq  \frac{2}{q_{n^{*}}^{\tau+\epsilon}}.
\end{equation}Since $x\in E(\tau)$ inequality \eqref{finite solutions} can have only finitely many solutions. It follows that for all $q\in \mathbb{R}$ sufficiently large the interval $[q,q^{\tau+\epsilon-1}]$ must contain the denominator of a partial quotient of $x$.  
   	
\end{proof}
\section{Proof of Theorem \ref{Main theorem}}
\label{proof section}
Let $\alpha,\beta\in\mathbb{R}\setminus \mathbb{Q}$. To any $\alpha\beta$ orbit $(x_n)_{n\geq 0}$ we can associate a unique sequence $\boldsymbol{\omega}=(\omega_n)_{n\geq 1}\in \{\alpha,\beta\}^{\mathbb{N}}$ such that $$x_{n}-x_{n-1}=\omega_n \mod 1$$ for all $n\geq 1.$ Given $\bo\in\{\alpha,\beta\}^{\N}$ and $N\in\mathbb{N}$ we let $$|\bo|_{\alpha,N}:=\#\{1\leq n \leq N:\omega_{n}=\alpha\}$$ and 
$$|\bo|_{\beta,N}:=\#\{1\leq n \leq N:\omega_{n}=\beta\}.$$ The following proposition shows that if an $\alpha\beta$ orbit $(x_n)_{n\geq 0}$ is such that the quantities $|\bo|_{\alpha,N}$ and $|\bo|_{\beta,N}$ are not uniformly comparable then $\{x_n\}_{n\geq 0}$ is dense in $\T.$

\begin{prop}
	\label{easy prop}
Let $\alpha,\beta\in\mathbb{R}\setminus\mathbb{Q}$ and $(x_n)_{n\geq 0}$ be an $\alpha\beta$ orbit. Suppose that for any $C>1$ there exists infinitely many $N\in \N$ such that either $$|\bo|_{\alpha,N}\geq C\cdot |\bo|_{\beta,N}$$ or $$|\bo|_{\beta,N}\geq C\cdot |\bo|_{\alpha,N}.$$ Then $\{x_n\}$ is dense in $\T$.
\end{prop}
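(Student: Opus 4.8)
The plan is to reduce the count-imbalance hypothesis to a purely combinatorial statement about the coding sequence $\bo$ — namely that $\bo$ contains arbitrarily long runs of a single symbol — and then to invoke density of irrational rotation orbits. First I would record that summing the increments yields
\[
x_N = x_0 + |\bo|_{\alpha,N}\,\alpha + |\bo|_{\beta,N}\,\beta \pmod 1,
\]
so the orbit points are the images of the monotone staircase $\{(|\bo|_{\alpha,N},|\bo|_{\beta,N})\}_{N\ge 0}\subset\N^2$ under the map $(a,b)\mapsto x_0+a\alpha+b\beta \bmod 1$. It therefore suffices to exhibit, for every $\epsilon>0$, some index at which the orbit becomes $\epsilon$-dense.

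The central step is to convert the global imbalance into a \emph{local} run. Fix $C>1$ and use the hypothesis to select a large $N$ with, say, $|\bo|_{\alpha,N}\ge 2C\,|\bo|_{\beta,N}$. The maximal blocks of consecutive $\alpha$'s among $\omega_1,\dots,\omega_N$ are separated by the $|\bo|_{\beta,N}$ occurrences of $\beta$, so there are at most $|\bo|_{\beta,N}+1$ such blocks while they contain $|\bo|_{\alpha,N}$ symbols in total. By pigeonhole some block has length at least $|\bo|_{\alpha,N}/(|\bo|_{\beta,N}+1)\ge C$ once $|\bo|_{\beta,N}\ge 1$, the case $|\bo|_{\beta,N}=0$ giving an $\alpha$-block of length $N$. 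Thus for every $C$ there is a run of length at least $C$ of one of the two symbols. Since there are only two symbols, a further pigeonhole over the (possibly alternating) majority symbol shows that one fixed symbol — say $\alpha$, after passing to the appropriate case — admits runs of unbounded length.

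Finally I would use that, $\alpha$ being irrational, the orbit $\{j\alpha \bmod 1 : 0\le j< L\}$ is $\epsilon$-dense in $\T$ once $L$ is large enough, and that $\epsilon$-density is preserved under any translation $z\mapsto z+\,\cdot\,$; hence any run $x_m, x_m+\alpha,\dots,x_m+L\alpha$ of length $L$ is $\epsilon$-dense. Given $\epsilon>0$, choosing a run longer than the corresponding threshold shows $\{x_n\}$ is $\epsilon$-dense, and letting $\epsilon\to 0$ gives density. If instead $\beta$ is the symbol with unbounded runs, the identical argument applies verbatim with $\beta$ in place of $\alpha$.

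The step I expect to require the most care is exactly the passage from the hypothesis — which controls only the aggregate counts $|\bo|_{\alpha,N}$ and $|\bo|_{\beta,N}$ — to the existence of a long \emph{consecutive} run, a statement about local structure. The block-counting pigeonhole bridges this gap, and the secondary pigeonhole that pins down a single symbol enjoying unbounded runs handles the possibility that the majority symbol switches as $C$ grows. Everything downstream is the standard density of an irrational rotation orbit, uniform in the base point.
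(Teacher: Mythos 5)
Your proposal is correct and takes essentially the same approach as the paper: the paper's two-sentence proof asserts exactly that the imbalance hypothesis forces $\boldsymbol{\omega}$ to contain arbitrarily long runs of a single symbol and then invokes density of irrational rotation orbits, and your block-counting pigeonhole (at most $|\boldsymbol{\omega}|_{\beta,N}+1$ maximal $\alpha$-blocks sharing $|\boldsymbol{\omega}|_{\alpha,N}$ symbols) together with the uniform $\epsilon$-density of finite rotation orbit segments merely supplies the details the paper leaves implicit. No gaps.
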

\begin{proof}
It follows from our hypothesis that the sequence $\bo$ must either contain arbitrarily long strings of consecutive $\alpha$ terms or consecutive $\beta$ terms. Since both $\alpha$ and $\beta$ are irrational, and any orbit of an irrational rotation is dense in $\T$, it follows that $\{x_n\}$ must also be dense in $\T$. 	
\end{proof}

\begin{prop}
	\label{hard prop}
	Let $\tau_1,\tau_2\geq 2$ satisfy $2\tau_1<\tau_2 +2$ and suppose that $\alpha\in E(\tau_{1})$ and $\beta\in W(\tau_2).$ Let $(x_n)_{n\geq 0}$ be an $\alpha\beta$ orbit for which there exists $C>1$ such that for all $N\in\mathbb{N}$ sufficiently large we have $$\frac{|\bo|_{\beta,N}}{C}\leq|\bo|_{\alpha,N}\leq C\cdot |\bo|_{\beta,N}.$$ Then $\overline{\dim}_{B}(\{x_n\})\geq 1- \frac{2(\tau_1-1)}{\tau_{2}} .$
\end{prop}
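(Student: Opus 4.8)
The plan is to estimate from below the number of $r$-separated points among $\{x_0, \dots, x_N\}$ for a well-chosen scale $r$ and a well-chosen length $N$, since the upper box dimension is governed by $\limsup_{r \to 0} \log N(\{x_n\}, r)/(-\log r)$, and a separated subset of size $M$ forces $N(\{x_n\}, r) \gtrsim M$. The key structural fact I would exploit is that, under the hypothesis of the proposition, the partial sums satisfy $x_n - x_0 = |\bo|_{\alpha,n}\,\alpha + |\bo|_{\beta,n}\,\beta \bmod 1$, so differences $x_m - x_n$ take the form $a\alpha + b\beta \bmod 1$ with $a = |\bo|_{\alpha,m} - |\bo|_{\alpha,n}$ and $b = |\bo|_{\beta,m} - |\bo|_{\beta,n}$, where the comparability hypothesis forces $|a|$ and $|b|$ to be of the same order of magnitude up to the constant $C$. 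Two orbit points coincide to within $r$ precisely when $\|a\alpha + b\beta\|$ is small, so the counting reduces to a Diophantine question: for how many pairs $(a,b)$ with $|a|, |b| \lesssim N$ can $a\alpha + b\beta$ be within $r$ of an integer?

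The main step is to choose the scale $r$ using the Liouville-type quality of $\beta$. Since $\beta \in W(\tau_2)$, I would pick a very good rational approximation $p/q$ with $|\beta - p/q| < q^{-\tau_2}$, so that $\|q\beta\| < q^{1 - \tau_2}$ is extremely small. The idea is to work with indices $N$ roughly comparable to $q$ (so that $b = q$ is attainable as a difference of $\beta$-counts, which is where the comparability hypothesis is essential: it guarantees enough $\beta$-steps occur once enough $\alpha$-steps do, and vice versa). For such $b = q$, the contribution $b\beta$ is nearly an integer, so $x_m - x_n \approx a\alpha \bmod 1$ is controlled purely by the $\alpha$-rotation on the remaining $a$ coordinates. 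I would then use Lemma \ref{interval lemma} together with the three-distance theorem (or the standard fact that the first $Q$ multiples of an irrational $\alpha$ with $\alpha \in E(\tau_1)$ are spread out at a scale no finer than about $\|q_n \alpha\| \sim q_n^{-(\tau_1 - 1 + \epsilon)}$) to guarantee that among the admissible values of $a$ we find about $M$ points whose $\alpha$-rotates are mutually $r$-separated.

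Quantitatively, the target exponent $1 - 2(\tau_1 - 1)/\tau_2$ should emerge as follows. With $r \approx q^{-(\tau_2 - 1)}$ set by the quality of $\beta$, and with the $\alpha$-rotation separating points only down to a scale dictated by $E(\tau_1)$, the number of separated points $M$ behaves like a power $r^{-s}$ with $s = 1 - 2(\tau_1 - 1)/\tau_2$; the exponent $2(\tau_1 - 1)$ reflects the loss from the $\alpha$-rotation's approximation exponent $\tau_1 - 1$ combined with the fact that both $|a|$ and $|b|$ range up to the same order, while division by $\tau_2$ converts the index scale into the metric scale $r$. I would make this precise by taking $-\log M / \log r$ and verifying it tends to the claimed value along the sequence of $q$'s coming from good $\beta$-approximations, using $\epsilon \to 0$ in Lemma \ref{interval lemma}; here the hypothesis $2\tau_1 < \tau_2 + 2$ is exactly what makes the exponent positive, i.e. the bound nontrivial.

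The hard part will be the counting step that lower-bounds the number of $r$-separated orbit points. The difficulty is twofold: first, I must ensure that the relevant differences $(a,b)$ are genuinely \emph{realised} by pairs of indices $m,n \le N$ in the orbit — this is where the comparability hypothesis $|\bo|_{\beta,N}/C \le |\bo|_{\alpha,N} \le C\,|\bo|_{\beta,N}$ must be leveraged to show that as $b$ runs over a range around $q$, the matching count $a$ also runs over a comparable range, giving a two-dimensional lattice of realisable $(a,b)$. Second, I must control the interaction between the two rotations: fixing $b \approx q$ kills the $\beta$-contribution via $\|q\beta\|$ being tiny, but I must check that the accumulated error over the range of $b$ stays below $r$, and simultaneously that the $\alpha$-coordinate spacing is not accidentally collapsed by a near-resonance between $\alpha$ and $\beta$. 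Handling these error terms carefully, and extracting a clean $r$-separated subset of the asserted cardinality from the lattice of realisable $(a,b)$, is where the technical weight of the argument lies.
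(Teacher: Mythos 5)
Your high-level strategy---use a very good rational approximation $p/q$ of $\beta$ to make the $\beta$-contribution to orbit differences nearly integral, and use the non-Liouville quality of $\alpha$ via Lemma \ref{interval lemma} to separate the $\alpha$-multiples---is indeed the paper's strategy, but your quantitative scheme contains a genuine gap, visible from a consistency check. You take the orbit window $N\approx q$ and the scale $r\approx q^{-(\tau_2-1)}$, and claim $M\approx r^{-s}$ separated points with $s=1-2(\tau_1-1)/\tau_2$. For, say, $\tau_1=2$ and $\tau_2=10$ this gives $M\approx q^{7.2}$ separated points drawn from a window containing only about $q$ orbit points, which is impossible. The scale is misidentified: the achievable separation is governed by the spacing of the $\alpha$-multiples over the realisable range of $a$, not by $\|q\beta\|\approx q^{1-\tau_2}$. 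In the paper the scale is $r\approx 1/q_l'$, where $q_l'$ is a convergent denominator of $\alpha$ placed by Lemma \ref{interval lemma} in the interval $\bigl[q_l^{(\tau_2-2\epsilon)/(2(\tau_1+\epsilon-1))},\,q_l^{(\tau_2-2\epsilon)/2}\bigr]$; the window has length about $q_l'$, which is polynomially \emph{longer} than $q_l$ (this is exactly where $2\tau_1<\tau_2+2$ enters: it makes the lower exponent exceed $1$), the count is $M\approx q_l'/(Cq_l)$, and the exponent emerges as $1-\log q_l/\log q_l'$.

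The second missing piece is the mechanism that controls \emph{all} pairwise $\beta$-contributions simultaneously. Fixing $b\approx q$ only handles pairs whose $\beta$-count difference is that one value; a separated set of $M$ points has $\binom{M}{2}$ pairwise differences, which cannot all equal $q$. Moreover, your ``two-dimensional lattice of realisable $(a,b)$'' is not available: along an orbit you cannot choose $a$ and $b$ independently, and comparability constrains only the cumulative counts $|\bo|_{\alpha,N}$ and $|\bo|_{\beta,N}$, not individual differences, which can be arbitrarily lopsided (e.g.\ along a run of $\alpha$-steps one has $b=0$ with $a$ large); your claim that the hypothesis forces $|a|$ and $|b|$ to be comparable is therefore also incorrect. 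The paper's device replacing all this is: for each $j\le|\bo|_{\alpha,q_l'}$ let $k_j$ be the first-passage time, i.e.\ the smallest $k$ with $\alpha j+\beta k \bmod 1\in\{x_n\}$; pigeonhole the $k_j$ into residue classes modulo $q_l$ and keep a class $W(l,p')$ of size at least $q_l'/(Cq_l)$ (this is the only place the comparability hypothesis is used, through $|\bo|_{\alpha,N}\ge N/C$). Then every pairwise difference satisfies $k_j-k_{j'}\equiv 0 \bmod q_l$ with $|k_j-k_{j'}|<q_l'$, so $\|\beta(k_j-k_{j'})\|\le q_l'/q_l^{\tau_2}\le q_l^{-\tau_2/2}$, which is dominated by the $\alpha$-separation $\|\alpha(j-j')\|\ge 1/(2q_l')$ because $q_l'\le q_l^{(\tau_2-2\epsilon)/2}$. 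Without this first-passage-plus-pigeonhole step, the counting step you yourself flagged as the hard part remains unresolved, and the window/scale choices in your outline would have to be reworked to the paper's values before the rest of the sketch could go through.
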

\begin{proof}
Without loss of generality we may assume that $\alpha,\beta\in [0,1]$. For the rest of the proof we fix $(x_n)_{n\geq 0}$ an $\alpha\beta$ orbit satisfying our hypothesis and let $\bo$ be the associated unique element of $\{\alpha,\beta\}^{\N}$. Without loss of generality we may further assume that $x_0=0.$ This means that for any $N\geq 1$ we have $$x_{N}=\alpha\cdot |\bo|_{\alpha,N}+\beta\cdot |\bo|_{\beta,N} \mod 1.$$ Notice that $|\bo|_{\alpha,N}+|\bo|_{\beta,N} =N$ for all $N\geq 1$. It follows from this observation and our hypothesis that there exists $C>1,$ not necessarily the same $C$ as in the statement of our proposition, such that 
\begin{equation}
\label{N comparable}
\frac{N}{C}\leq |\bo|_{\alpha,N}
\end{equation}for all $N$ sufficiently large.  

Let $\epsilon>0$ be arbitrary. Since $\beta\in W(\tau_2)$ there exists a sequence of reduced fractions $(p_l/q_l)_{l\geq 1}$ such that 
\begin{equation}
\label{good approximation}
\left|\beta-\frac{p_l}{q_l}\right|\leq \frac{1}{q_{l}^{\tau_2}}
\end{equation} for all $l\geq 1$. Without loss of generality we may assume that the sequence $(q_l)_{l=1}^{\infty}$ is strictly increasing. By Lemma \ref{interval lemma}, for all $l$ sufficiently large, there exists $q_{l}'$ the denominator of some partial quotient of $\alpha$ which satisfies $$q_{l}'\in \left[q_{l}^{\frac{\tau_2-2\epsilon}{2(\tau_{1}+\epsilon-1)}},q_{l}^{\frac{\tau_2-2\epsilon}{2}}\right].$$  For any $j\in \mathbb{N}$ we let $k_{j}$ denote the minimum of those $k\in\mathbb{N}$ satisfying $$\alpha j+\beta k \mod 1 \in \{x_n\}.$$ Equivalently $k_j$ is the smallest integer such that $|\bo|_{\alpha,j+k_j}=j$. Notice that for any $N\in\mathbb{N}$, if $1\leq j\leq |\bo|_{\alpha,N}$ then we must have $k_{j}< N.$ For all $l$ sufficiently large so that $q_{l}'$ is well defined,  we let $$W(l,p):=\{1\leq j\leq |\bo|_{\alpha,q_{l}'}:k_{j}=p \mod q_{l}\}$$ for each $0\leq p\leq q_{l}-1$. By the pigeonhole principle and \eqref{N comparable}, for all $l$ sufficiently large there exists $0\leq p'\leq q_{l}-1$ such that 
\begin{equation}
\label{count}
\#W(l,p')\geq \frac{q_l'}{Cq_{l}}.
\end{equation} We now set out to prove that the elements of $\{x_n\}$ corresponding to the elements of $W(l,p')$ are well separated. Observe now that for any distinct $j,j'\in W(l,p')$ we have 
\begin{equation}
\label{split}
\|(\alpha j +\beta k_j)-(\alpha j'+\beta k_{j'})\|\geq \underbrace{\|\alpha(j-j')\|}_{(1)}-\underbrace{\|\beta(k_j-k_{j'}\|}_{(2)}.
\end{equation}
We now show how (1) can be bounded from below and (2) can be bounded from above. Notice that $j-j'$ is a non-zero integer satisfying $|j-j'|<q_{l}'$. Combining \eqref{property1} and \eqref{property3} it follows that
\begin{equation}
\label{lower bound}
\|\alpha(j-j')\|\geq \frac{1}{2q_{l}'}.
\end{equation}
Now focusing on $(2),$ let $d_j,d_{j'}\in \mathbb{N}$ be such that $k_{j}=d_jq_l+p'$ and $k_{j'}=d_{j'}q_{l}+p'$. Then we have 
\begin{align}
\label{upper bound}
\|\beta(k_{j}-k_{j'})\|&\leq \left\|\left(\beta-\frac{p_{l}}{q_l}\right)(k_{j}-k_{j'})\right\|+\left\|\frac{p_l}{q_l}(k_{j}-k_{j'})\right\|\nonumber\\
&\leq \frac{q_{l}'}{q_{l}^{\tau_2}}+\left\|\frac{p_l}{q_l}(d_{j}q_{l}-d_{j'}q_l)\right\|\nonumber\\
&=\frac{q_{l}'}{q_{l}^{\tau_2}}+\|p_l(d_{j}-d_{j'})\|\nonumber\\
&=\frac{q_{l}'}{q_{l}^{\tau_2}}\nonumber\\
&\leq \frac{1}{q_{l}^{\tau_2/2}}.
\end{align} In the second line in the above we have used \eqref{good approximation} and the inequality $|k_j-k_{j'}|<q_{l'}.$ This inequality follows from the fact that $k_j$ and $k_{j'}$ are integers satisfying $0\leq k_j, k_{j'}<q_{l'}$. In the final line we used that $q_{l}'\leq q_{l}^{\frac{\tau_2-2\epsilon}{2}}.$ Substituting \eqref{lower bound} and \eqref{upper bound} into \eqref{split} we have 
\begin{equation}
\label{separation}\|(\alpha j +\beta k_j)-(\alpha j'+\beta k_{j'})\|\geq \frac{1}{2q_{l}'}- \frac{1}{q_{l}^{\tau_{2}/2}}.
\end{equation}
Since $q_{l}'\leq q_{l}^{\frac{\tau_2-2\epsilon}{2}},$ for $l$ sufficiently large we have $$\frac{1}{2q_{l}'}- \frac{1}{q_{l}^{\tau_{2}/2}}\geq \frac{1}{2q_{l}'}\left(1-\frac{2q_{l}'}{q_{l}^{\tau_2/2}}\right)\geq \frac{1}{2q_{l}'}\left(1-\frac{2}{q_{l}^{\epsilon}}\right)\geq \frac{1}{4q_{l}'}.$$ Using this lower bound in \eqref{separation}, it follows that for $l$ sufficiently large, for any distinct $j,j'\in W(l,p')$ we have $$\left\|(\alpha j +\beta k_{j})-(\alpha j'+\beta k_{j'})\right\|\geq \frac{1}{4q_{l}'}.$$ Therefore for any $l$ sufficiently large we require at least $\#W(l,p')$ closed balls of radius $(10q_{l}')^{-1}$ to cover $\{x_n\}$. Using the lower bound for $\#W(l,p')$ provided by \eqref{count} and the inequality $q_{l}'\geq q_{l}^{\frac{\tau_2-2\epsilon}{2(\tau_{1}+\epsilon-1)}},$ we have
\begin{align*}
\overline{\dim}_{B}(\{x_n\})=\limsup_{r\to 0}\frac{\log N(\{x_n\},r)}{-\log r}&\geq \limsup_{l\to\infty} \frac{\log q_{l}'/Cq_{l}}{\log 10q_{l}'}\\
&\geq 1-\liminf_{l\to\infty} \frac{\log q_{l}}{\log q_{l'}}\\
&\geq 1 - \frac{2(\tau_{1}+\epsilon-1)}{\tau_{2}-2\epsilon}.
\end{align*}Since $\epsilon$ was arbitrary we may conclude 
$$\overline{\dim}_{B}(\{x_n\})\geq 1- \frac{2(\tau_1-1)}{\tau_{2}}.$$
\end{proof} Since any dense subset of $\T$ has upper box dimension $1$, Propositions \ref{easy prop} and \ref{hard prop} together imply Theorem \ref{Main theorem}.
\section{Applications to embeddings of self-similar sets}
\label{applications}

We call a map $\varphi:\mathbb{R}^d\to\mathbb{R}^d$ a similarity if there exists $r\in(0,1), t\in \mathbb{R}^d,$ and a $d\times d$ orthogonal matrix $O$ such that $\varphi=r\cdot O +t.$ For our purposes, we call a finite set of similarities $\Phi=\{\varphi_i\}_{i\in I}$ an iterated function system or IFS for short. A well known result due to Hutchinson \cite{Hut} states that for any IFS $\Phi,$ there exists a unique non-empty compact set $F\subset\mathbb{R}^d$ satisfying $$F=\bigcup_{i\in I}\varphi_{i}(F).$$ We call $F$ the self-similar set of $\Phi$. Many well known fractal sets, such as the middle third Cantor set and the von-Koch curve, can be realised as self-similar sets for appropriate choices of IFS. If $\varphi_{i}(F)\cap \varphi_{j}(F)=\emptyset$ for all $i\neq j$ then we say that $\Phi$ satisfies the strong separation condition. We say that $\Phi$ satisfies the open set condition if there exists a non-empty bounded open $O\subset \mathbb{R}^d$ such that $\varphi_{i}(O)\subset O$ for all $i\in I$ and $\varphi_{i}(O)\cap \varphi_{j}(O)=\emptyset$ for all $i\neq j$.

Let $A,B\subset\mathbb{R}^d$. We say that $A$ can be affinely embedded into $B$ if there exists a map $f:\mathbb{R}^d\to\R^d$ of the form $f(x)=Mx +a$ for some invertible matrix $M$ and $a\in\mathbb{R}^{d}$ which satisfies $f(A)\subset B$. It is an interesting problem to determine when one self-similar set can be affinely embedded inside of another. This problem was first studied in \cite{FHR}. It is reasonable to expect that if a self-similar set can be affinely embedded inside of another self-similar set which is totally disconnected, then the underlying contraction ratios should exhibit some arithmetic dependence. With this in mind the authors of \cite{FHR} formulated the following conjecture.

\begin{conjecture}
	\label{Embed conjecture}
Suppose that $E,F$ are two totally disconnected non-trivial self-similar sets in $\R^d$, generated by IFSs $\Phi=\{\varphi_{i}\}_{i\in I}$ and $\Psi=\{\psi_j\}_{j\in J}$ respectively. Let $r_i, r_j'$ denote the contraction ratios of $\varphi_i$ and $\psi_j$ respectively. Suppose that $F$ can be affinely embedded into $E$. Then for each $j\in J$ there exists non-negative rational numbers $t_{i,j}$ such that $r_{j}'=\prod_{i\in I}r_{i}^{t_{i,j}}$. In particular, if $r_i=r$ for all $i\in I,$ then $\log r_{j}'/\log r\in \mathbb{Q}$ for all $j\in J$.
\end{conjecture}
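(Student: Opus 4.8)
The plan is to use the new dimension bounds to settle Conjecture~\ref{Embed conjecture} in the Diophantine ranges opened up by Theorem~\ref{Main theorem}, via the reduction of affine embeddings to the dimension theory of $\alpha\beta$ sets developed in \cite{FengXiong}. I would first restrict to $d=1$ and to the \emph{in particular} clause, so that $E$ is generated by finitely many maps of common ratio $r$ and the goal becomes $\log r_{j}'/\log r\in\mathbb{Q}$ for every $j\in J$; the general matrix case in $\R^{d}$ requires replacing $r$ by the singular values of $M$ and will be addressed afterwards. Fixing $j$ and writing $s:=r_{j}'$, I argue by contraposition: assume $t:=\log s/\log r\notin\mathbb{Q}$ and derive a contradiction with the total disconnectedness of $E$.

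The heart of the matter is a transference from the embedding to an $\alpha\beta$ orbit. An affine copy $f(F)\subset E$, with both $\Phi$ and $\Psi$ totally disconnected, forces a rigid matching of the two self-similar hierarchies: a level-$m$ cylinder of $F$ has diameter $\approx s^{m}$ and must lie inside a cylinder of $E$ of comparable size, hence of level $\approx mt$. Following this matching down the hierarchy and recording, at each stage, whether one advances the $E$-scale or the $F$-scale produces a sequence of points on $\T$ whose consecutive increments take one of two values built from $\log r$ and $\log s$; this is precisely an $\alpha\beta$ orbit $\{x_{n}\}$, and the irrationality of $t$ is exactly what makes the associated $\alpha,\beta$ rationally independent. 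Crucially, the construction realises $\{x_{n}\}$ as a bi-Lipschitz image of a subset of $E$, so that, since box dimension is a bi-Lipschitz invariant and self-similar sets have equal upper and lower box dimension,
$$\overline{\dim}_{B}(\{x_{n}\})\leq \dim_{B}E.$$

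With the orbit in place I would invoke the results of Section~\ref{proof section}. Since $E$ is a non-trivial totally disconnected self-similar set, it is standard that $\dim_{B}E<1$. On the other hand, the Diophantine type of $\alpha$ and $\beta$ is determined by that of $t$ (equivalently of $\log r$ and $\log s$); whenever their exact orders fall in the admissible range $2\tau_{1}<\tau_{2}+2$, Theorem~\ref{Main theorem} gives $\overline{\dim}_{B}(\{x_{n}\})\geq 1-\tfrac{2(\tau_{1}-1)}{\tau_{2}}$, and in the Liouville/non-Liouville regime Corollary~\ref{Corollary} gives $\overline{\dim}_{B}(\{x_{n}\})=1$. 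Combining either lower bound with the upper bound $\overline{\dim}_{B}(\{x_{n}\})\leq\dim_{B}E<1$ yields a contradiction, provided the lower bound exceeds $\dim_{B}E$. This always holds in the setting of Corollary~\ref{Corollary}, and more generally whenever $\dim_{B}E<1-\tfrac{2(\tau_{1}-1)}{\tau_{2}}$. In such cases we are forced to conclude $t\in\mathbb{Q}$, and hence the conjecture for the ratios in question.

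I expect two obstacles. The principal one is making the transference rigorous: producing $\{x_{n}\}$ canonically from $f$, verifying that its increments are genuinely $\alpha$ and $\beta$ rather than merely asymptotically so, quantifying the Diophantine types of $\alpha,\beta$ in terms of those of $\log r,\log s$ so that the hypothesis $2\tau_{1}<\tau_{2}+2$ can be checked, and securing the bi-Lipschitz embedding that gives $\overline{\dim}_{B}(\{x_{n}\})\leq\dim_{B}E$. The second is the passage from the uniform-ratio case to the full relation $r_{j}'=\prod_{i\in I}r_{i}^{t_{i,j}}$: distinct ratios $r_{i}$ turn the two-letter orbit into a multi-letter object, so one would either iterate the argument or pass to the higher-dimensional analogues of $\alpha\beta$ sets mentioned in the introduction. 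For these reasons I anticipate that the method establishes the conjecture precisely in the Diophantine ranges covered by Theorem~\ref{Main theorem}, giving genuinely new cases rather than the conjecture in full.
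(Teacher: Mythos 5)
The statement you have been given is a \emph{conjecture}, due to Feng, Huang and Rui \cite{FHR}: the paper does not prove it, and the closest the paper comes is Theorem \ref{embed theorem}, a \emph{conditional non-embedding} result. Your proposal has the right skeleton for that theorem, but two steps in it are genuinely wrong, and they are exactly the steps that would be needed to get more than the paper does. First, your claimed inequality $\overline{\dim}_{B}(\{x_{n}\})\leq \dim_{B}E$ via a bi-Lipschitz embedding of the orbit into $E$ fails: the $\alpha\beta$ orbit produced by the hierarchy-matching does not live in $E$ but in (a Lipschitz image of) the \emph{distance set} of $E$. In the paper's construction one takes stopping times $s_{m}=\min\{n: M(\psi_{j^{*}}^{n}(F))+a\subset \varphi_{i_{1}\ldots i_{m}}(E)\}$, forms $V=\{r_{i_{1}\ldots i_{m}}^{-1}(r_{j^{*}}')^{s_{m}}|Mv| : m\geq 1\}$ with $v\in F-F$ nonzero, notes $V\subset \{|x-y|:x,y\in E\}$, and applies the Lipschitz map $f(x)=\log x/\log r_{j^{*}}' \bmod 1$; the only bound available is $\overline{\dim}_{B}f(V)\leq \overline{\dim}_{B}(E-E)\leq 2\dim_{H}(E)$. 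This unavoidable factor $2$ is precisely why the paper's threshold is $\frac{1}{2}\bigl(1-\frac{2(\tau_{1}-1)}{\tau_{2}}\bigr)$, at best $1/2$ in the Liouville regime (Corollary \ref{embed corollary}); your inequality would, if true, essentially reprove the Shmerkin--Wu threshold $\dim<1$, which this method cannot reach.

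Second, your source of contradiction is unsound: it is false that a non-trivial totally disconnected self-similar set must satisfy $\dim_{B}E<1$ (there are Lebesgue-null, hence totally disconnected, self-similar sets on the line of dimension $1$, and in $\R^{d}$ of any dimension up to $d$), and even granting $\dim_{B}E<1$, that does not beat the bound $2\dim_{H}(E)$ from the transference. One is forced to \emph{assume} $\dim_{H}(E)<\frac{1}{2}\bigl(1-\frac{2(\tau_{1}-1)}{\tau_{2}}\bigr)$, which is exactly the paper's hypothesis, so the method yields Theorem \ref{embed theorem} and nothing stronger --- the conjecture remains open even in the Liouville range. Finally, your reduction to a common ratio $r$ for $\Phi$ destroys the mechanism: the two letters of the orbit are $\alpha=-\log r_{1}/\log r_{j^{*}}'$ and $\beta=-\log r_{2}/\log r_{j^{*}}'$, arising from the \emph{two distinct contraction ratios} of $\Phi$ (the increment from stage $m$ to $m+1$ is $-\log r_{i_{m+1}}/\log r_{j^{*}}' \bmod 1$), not from alternating advances of the $E$-scale and the $F$-scale. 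With a single ratio one gets $\alpha=\beta$, which are rationally dependent, and Theorem \ref{Main theorem} cannot apply: its hypotheses $\alpha\in E(\tau_{1})$, $\beta\in W(\tau_{2})$, $2\tau_{1}<\tau_{2}+2$ force $\alpha$ and $\beta$ to have different exact orders.
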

Conjecture \ref{Embed conjecture} was studied in \cite{Alg1, Alg2, FHR, FengXiong, Shm, Wu}. In \cite{FHR} it was shown that Conjecture \ref{Embed conjecture} is true if we also assume that $\Phi$ satisfies the strong separation condition, $r_i=r$ for all $i\in I,$ and $\dim_{H}(E)<1/2$. Similar results were obtained in \cite{FengXiong} without the assumption $r_i=r$ for all $i\in I$. These results come at the cost that $\dim_{H}(E)$ is required to satisfy a stricter upper bound. In particular, the results of \cite{FengXiong} imply that when $\Phi$ consists of two similarities then Conjecture \ref{Embed conjecture} is true if we also assume that $\Phi$ satisfies the strong separation condition and $\dim_{H}(E)<1/4$. Shmerkin and Wu obtained much stronger results when $d=1$. Shmerkin in \cite{Shm} showed that Conjecture \ref{Embed conjecture} is true under the additional assumptions that $d=1$, $\Phi$ satisfies the open set condition, $r_i=r$ for all $i\in I,$  and $\dim_{H}(E)<1$. Wu in \cite{Wu} obtained the same result as Shmerkin but required the stronger assumption that $\Phi$ satisfies the strong separation condition. 

Our main result in this direction is the following theorem.
\begin{thm}
	\label{embed theorem}
Let $\Phi=\{\varphi_i\}_{i\in I}$ and $\Psi=\{\psi_j\}_{j\in J}$ be two IFSs satisfying the following properties:
	\begin{enumerate}
		\item $\Phi$ satisfies the strong separation condition.
		\item There exists $r_1,r_2\in(0,1)$ and $I_{1},I_{2}\subset I$ such that $\Phi=\{\varphi_{i,1}=r_{1}O_{i,1}+t_{i,1}\}_{i\in I_{1}} \cup\{\varphi_{i,2}=r_{2}O_{i,2}+t_{i,2}\}_{i\in I_{2}}.$
		\item There exists $j^{*}\in J$ such that:
		\begin{enumerate}
			\item $\psi_{j^{*}}=r_{j^*}'I_{d}+t_{j^*}.$
			\item There exists $\tau_1,\tau_2\geq 2$ satisfying $2\tau_1<\tau_2+2$ and $$-\frac{\log r_{1}}{\log r_{j^*}'}\in E(\tau_1)\quad \text{and} \quad -\frac{\log r_{2}}{\log r_{j^*}'}\in W(\tau_2).$$
		\end{enumerate}
	\end{enumerate} Then if $\dim_{H}(E)< \frac{1}{2}\left(1-\frac{2(\tau_1-1)}{\tau_2}\right)$ then $F$ cannot be affinely embedded into $E$.
\end{thm}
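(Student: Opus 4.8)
The plan is to argue by contradiction and reduce the non-existence of an affine embedding to the dimension lower bound for $\alpha\beta$ orbits supplied by Theorem \ref{Main theorem}. Suppose $F$ can be affinely embedded into $E$, say $f(F)\subset E$ with $f(x)=Mx+a$ and $M$ invertible; write $G:=f(F)$, $\rho:=r_{j^*}'$ and $s:=\dim_{H}(E)$. The first observation is that, although $M$ need not be conformal, the conjugate $f\circ\psi_{j^*}\circ f^{-1}$ is still a genuine homothety of ratio $\rho$, precisely because $\psi_{j^*}$ carries no rotation. Iterating, the sets $G_{n}:=f(\psi_{j^*}^{n}(F))$ form a nested family of homothetic copies of $G$ of diameter $\asymp\rho^{n}$, all contained in $E$ and shrinking to the fixed point $z_{\infty}:=f(\mathrm{Fix}(\psi_{j^*}))\in E$.

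Next I would manufacture an $\alpha\beta$ orbit from the cylinder structure of $E$. Set $\alpha:=\log r_{1}/\log\rho$ and $\beta:=\log r_{2}/\log\rho$; since membership in $E(\tau)$ and $W(\tau)$ is invariant under change of sign, hypothesis (3)(b) gives $\alpha\in E(\tau_{1})$ and $\beta\in W(\tau_{2})$. Using the strong separation condition, code $z_{\infty}$ by its unique address $\mathbf{w}=i_{1}i_{2}\cdots\in I^{\N}$, and let $C_{m}=\varphi_{i_{1}\cdots i_{m}}(E)$ be the associated nested cylinders, whose contraction ratio is $r_{1}^{a_{m}}r_{2}^{b_{m}}=\rho^{S_{m}}$ with $S_{m}:=a_{m}\alpha+b_{m}\beta$ and $a_{m},b_{m}$ the number of letters of the prefix lying in $I_{1},I_{2}$. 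Declaring the $m$-th step of the orbit to be a step of size $\alpha$ or $\beta$ according as $i_{m}\in I_{1}$ or $i_{m}\in I_{2}$ produces an $\alpha\beta$ orbit $(y_{m})_{m\ge 0}$ with $y_{m}=S_{m}\bmod 1$. Theorem \ref{Main theorem} then applies verbatim and yields $\overline{\dim}_{B}(\{y_{m}\})\ge 1-\frac{2(\tau_{1}-1)}{\tau_{2}}$.

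The heart of the argument, and the step I expect to be the main obstacle, is to transport the separated structure of $\{y_{m}\}$ into the difference set $E-E$. For each $m$ I would match the homothetic copy $G_{n(m)}$ of the correct size to the cylinder $C_{m}$: choosing $n(m)$ to be the least integer for which $\mathrm{diam}(G_{n(m)})$ is small enough, the strong separation condition forces $G_{n(m)}\subset C_{m}$, since $G_{n(m)}$ contains $z_{\infty}\in C_{m}$ and $C_{m}$ is separated from its siblings by a gap $\asymp\mathrm{diam}(C_{m})$. Renormalising by the similarity $\varphi_{i_{1}\cdots i_{m}}^{-1}$ sends $C_{m}$ onto $E$ and sends $G_{n(m)}$ to a copy of $f(F)$ inside $E$ whose diameter is $\asymp\rho^{\theta_{m}}$, where $\theta_{m}=n(m)-S_{m}$ differs from $y_{m}$ only by a fixed constant shift and one circle seam. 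Fixing distinct points $u\neq v$ of $F$, this copy carries two marked points $P_{m},Q_{m}\in E$ with $|P_{m}-Q_{m}|\asymp\rho^{\theta_{m}}$, so $P_{m}-Q_{m}\in E-E$. Because distinct points of $\R^{d}$ are at least as far apart as the difference of their norms, an $r$-separated subset of $\{y_{m}\}$ produces, through the bi-Lipschitz map $\theta\mapsto\rho^{\theta}$, a $\gtrsim r$-separated subset of $\{P_{m}-Q_{m}\}\subset E-E$ of the same cardinality. Hence $\overline{\dim}_{B}(\{y_{m}\})\le\overline{\dim}_{B}(E-E)$.

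Finally I would bound $\overline{\dim}_{B}(E-E)$ from above. As $E-E$ is the image of $E\times E$ under the Lipschitz subtraction map, $\overline{\dim}_{B}(E-E)\le 2\,\overline{\dim}_{B}(E)$, and the strong separation condition (which implies the open set condition) gives $\overline{\dim}_{B}(E)=\dim_{H}(E)=s$. Chaining the three inequalities yields
\[
1-\frac{2(\tau_{1}-1)}{\tau_{2}}\le\overline{\dim}_{B}(\{y_{m}\})\le\overline{\dim}_{B}(E-E)\le 2s=2\dim_{H}(E),
\]
which contradicts the hypothesis $\dim_{H}(E)<\frac12\big(1-\frac{2(\tau_{1}-1)}{\tau_{2}}\big)$; therefore no affine embedding exists. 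The delicate points, all concentrated in the transport step, are checking that $G_{n(m)}$ genuinely lands inside a single cylinder under the strong separation condition, that the two marked points do not straddle cylinder boundaries after renormalisation, and that the distortion introduced by the non-conformal matrix $M$ only affects the implied constants; none of these should alter the exponents.
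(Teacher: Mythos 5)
Your proposal is correct and follows essentially the same route as the paper's proof: the fixed point of $\psi_{j^*}$ mapped into $E$, coding of its image by cylinders of $\Phi$, the minimal index $n(m)$ (the paper's $s_m$) at which the homothetic copies fit inside successive cylinders with two-sided bounds from strong separation, renormalisation to produce distances in $E-E$, and the logarithm-mod-one map yielding an $\alpha\beta$ orbit to which Theorem \ref{Main theorem} and the Lipschitz chain $\overline{\dim}_{B}(E-E)\leq 2\dim_{H}(E)$ apply. One point to tighten: where you write $|P_m-Q_m|\asymp\rho^{\theta_m}$ and assert that the non-conformality of $M$ ``only affects the implied constants,'' note that an implied constant genuinely varying with $m$ (even within fixed bounds) would destroy the separation transfer at scales below its oscillation; your construction in fact delivers the exact identity $|P_m-Q_m|=|M(u-v)|\,\rho^{\theta_m}$, because the renormalising maps are similarities whose orthogonal parts preserve norms and $\psi_{j^*}$ carries no rotation by hypothesis 3(a) --- this is precisely the paper's step of taking norms to arrive at \eqref{inclusion2}, and it should be stated as an equality rather than a comparability.
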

%\begin{thm}
%	\label{embed theorem}
%Let $\Phi$ and $\Psi$ be two IFSs of the form $\Phi=\{\varphi_{i,1}=r_{1}O_{i,1}+t_{i,1}\}_{i\in I_{1}} \cup\{\varphi_{i,2}=r_{2}O_{i,2}+t_{i,2}\}_{i\in I_{2}}$ and $\Psi=\{\psi_{j}=r_{j}I_{d}+t_{j}\}.$ We assume that $\Phi$ satisfies the open set condition and $$\frac{\log r_{1}}{\log r_{j}^*}\in E(\tau_1)\quad \text{and} \quad \frac{\log r_{2}}{\log r_{j}^*}\in W(\tau_2)$$ for some $j^*\in J$ and $\tau_1,\tau_2\geq 2$ satisfying $2\tau_1<\tau_2+2.$ If $\dim_{H}(E)< \frac{1}{2}\left(1-\frac{2(\tau_1-1)}{\tau_2}\right)$ then $F$ cannot be affinely embedded into $E$.
%\end{thm}
Theorem \ref{embed theorem} has the following corollary.

\begin{cor}
	\label{embed corollary}
Let $\Phi=\{\varphi_i\}_{i\in I}$ and $\Psi=\{\psi_j\}_{j\in J}$ be two IFSs satisfying the following properties:
\begin{enumerate}
	\item $\Phi$ satisfies the strong separation condition.
	\item There exists $r_1,r_2\in(0,1)$ and $I_{1},I_{2}\subset I$ such that $\Phi=\{\varphi_{i,1}=r_{1}O_{i,1}+t_{i,1}\}_{i\in I_{1}} \cup\{\varphi_{i,2}=r_{2}O_{i,2}+t_{i,2}\}_{i\in I_{2}}.$
	\item There exists $j^{*}\in J$ such that:
	\begin{enumerate}
		\item $\psi_{j^{*}}=r_{j^*}I_{d}+t_{j^*}.$
		\item $-\frac{\log r_{1}}{\log r_{j^*}'}$ is not a Liouville number and $-\frac{\log r_{2}}{\log r_{j^*}'}$ is a Liouville number.
	%	\item There exists $\tau_1,\tau_2\geq 2$ satisfying $2\tau_1<\tau_2+2$ and $$\frac{\log r_{1}}{\log r_{j}^*}\in E(\tau_1)\quad \text{and} \quad \frac{\log r_{2}}{\log r_{j}^*}\in W(\tau_2).$$
	\end{enumerate}
\end{enumerate} Then if $\dim_{H}(E)< \frac{1}{2}$ then $F$ cannot be affinely embedded into $E$.
\end{cor}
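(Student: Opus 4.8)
The plan is to deduce Corollary~\ref{embed corollary} from Theorem~\ref{embed theorem} by a limiting argument in which the parameter $\tau_2$ is sent to infinity. Write $\alpha := -\log r_1/\log r_{j^*}'$ and $\beta := -\log r_2/\log r_{j^*}'$. Since $\alpha$ is not a Liouville number, its exact order $\tau(\alpha)$ is finite; setting $\tau_1 := \tau(\alpha)$ we have $\tau_1 \in [2,\infty)$ (every irrational has exact order at least $2$ by Dirichlet's theorem) and $\alpha \in E(\tau_1)$ directly from the definition of the exact order. Since $\beta$ is a Liouville number we have $\tau(\beta) = \infty$, and hence $\beta \in W(\tau_2)$ for every $\tau_2 \geq 2$.

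First I would record the elementary observation that $\frac{1}{2}\left(1 - \frac{2(\tau_1-1)}{\tau_2}\right) = \frac12 - \frac{\tau_1 - 1}{\tau_2}$ increases to $\frac12$ as $\tau_2 \to \infty$, with $\tau_1$ held fixed. Given the hypothesis $\dim_H(E) < \tfrac12$, choose $\delta > 0$ with $\dim_H(E) < \tfrac12 - \delta$, and then pick $\tau_2$ large enough that both $\tau_2 > 2\tau_1 - 2$ and $\frac{\tau_1 - 1}{\tau_2} < \delta$ hold; each of these is satisfied for all sufficiently large $\tau_2$, so such a choice exists. The first inequality is exactly the admissibility constraint $2\tau_1 < \tau_2 + 2$ required by Theorem~\ref{embed theorem}, and because $\beta$ is Liouville we still have $\beta \in W(\tau_2)$ for this (arbitrarily large) value of $\tau_2$.

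For this pair $(\tau_1,\tau_2)$ every hypothesis of Theorem~\ref{embed theorem} is now met: conditions (1), (2) and (3a) coincide with those of Corollary~\ref{embed corollary}, condition (3b) holds because $\alpha \in E(\tau_1)$, $\beta \in W(\tau_2)$ and $2\tau_1 < \tau_2 + 2$, and finally $\dim_H(E) < \tfrac12 - \delta < \tfrac12 - \frac{\tau_1-1}{\tau_2} = \frac{1}{2}\left(1 - \frac{2(\tau_1-1)}{\tau_2}\right)$ is precisely the dimension bound needed. Theorem~\ref{embed theorem} then gives that $F$ cannot be affinely embedded into $E$, which is the desired conclusion. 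Since the corollary is a pure specialization of Theorem~\ref{embed theorem}, there is no genuine obstacle; the only step needing care is the simultaneous choice of $\tau_2$ meeting both the admissibility constraint and the near-optimality condition $\frac{\tau_1-1}{\tau_2} < \delta$, which is possible exactly because a Liouville $\beta$ lies in $W(\tau_2)$ for arbitrarily large $\tau_2$.
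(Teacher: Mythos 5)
Your proof is correct and is essentially the paper's own (implicit) argument: the paper presents Corollary~\ref{embed corollary} as an immediate consequence of Theorem~\ref{embed theorem}, obtained exactly as you do by fixing $\tau_1=\tau(\alpha)<\infty$ for the non-Liouville ratio and letting $\tau_2\to\infty$, which is legitimate since a Liouville number lies in $W(\tau_2)$ for every $\tau_2\geq 2$ and the bound $\frac{1}{2}\left(1-\frac{2(\tau_1-1)}{\tau_2}\right)$ tends to $\frac{1}{2}$. Your explicit choice of $\delta$ and of $\tau_2$ satisfying both $2\tau_1<\tau_2+2$ and $\frac{\tau_1-1}{\tau_2}<\delta$ simply fills in the routine details of that specialization.
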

We emphasise that property $2.$ in the statement of Theorem \ref{embed theorem} and Corollary \ref{embed corollary} means that the IFS $\Phi$ consists of similarities whose contraction ratios are either $r_1$ or $r_2.$ Property 3a. means that the similarity $\psi_{j^*}$ has the identity matrix as its rotation component. One of the strengths of Theorem \ref{embed theorem} and Corollary \ref{embed corollary} is that they provide information when the elements of $\Phi$ have different contraction ratios. Most results in this area have the additional assumption that the elements of $\Phi$ have the same contraction ratio (see \cite{Alg1,Alg2,FHR,Shm,Wu}). Moreover, at the cost of an additional Diophantine condition and rotation assumption, these statements allows us to weaken the dimension assumption $\dim_{H}(E)<1/4$ that was needed in the work of Feng and Xiong \cite{FengXiong}.

Our proof of Theorem \ref{embed theorem} is essentially the same argument as one that is used in the proof of Theorem 1.2 from \cite{FengXiong}, apart from a few minor changes. We include the details of this proof for completion.

\begin{proof}[Proof of Theorem \ref{embed theorem}]
Let $\Phi$ and $\Psi$ be two IFSs satisfying the hypothesis of Theorem \ref{embed theorem}. Suppose that $F$ can be affinely embedded into $E$. Let $M$ be an invertible matrix and $a\in \mathbb{R}^d$ be such that 
\begin{equation}
\label{embed equation}
M(F)+a\in E.
\end{equation} We will now set out to prove that $$\dim_{H}(E)\geq 	\frac{1}{2}\left(1-\frac{2(\tau_1-1)}{\tau_2}\right)$$ and thus conclude our theorem.

Let $x_{j^*}\in F$ denote the unique point satisfying $\psi_{j^*}(x_{j^*})=x_{j^*}.$ Clearly $x_{j^*}\in \psi_{j^*}^n(F)$ for all $n\in \mathbb{N}$. Let $y_{j}^{*}$ be given by $$y_{j^*}:=Mx_{j^*}+a.$$ By \eqref{embed equation} we know that $y_{j}^*\in E$. Therefore there exists a sequence $(i_m)\in I^{\N}$ such that $y_{j^*}=\lim_{m\to\infty} \varphi_{i_1\ldots i_m}(0).$ Here and throughout we use $\varphi_{i_1\ldots i_m}$ to denote the concatenation $\varphi_{i_1}\circ \cdots \circ \varphi_{i_m}$ and $r_{i_1\ldots i_m}$ to denote the product $\prod_{l=1}^{m}r_{i_l}$. Our point $y_{j^*}$ satisfies $y_{j^*}\in  \varphi_{i_1\ldots i_m}(E)$ for all $m\in \mathbb{N}$. It therefore follows from the above that
\begin{equation}
\label{intersection}
(M(\psi_{j^*}^n(F))+a)\cap  \varphi_{i_1\ldots i_m}(E)\neq \emptyset
\end{equation} for all $n,m\geq 0$. Because $\Phi$ satisfies the strong separation condition we have
\begin{equation*}
\label{separation parameter}
c:=\inf_{i\neq i'}d(\varphi_{i}(E),\varphi_{i'}(E))>0.
\end{equation*} It is also the case that for each $m\in\mathbb{N}$ we have 
\begin{equation}
\label{level n separation}
d( \varphi_{i_1\ldots i_m}(E),E\setminus  \varphi_{i_1\ldots i_m}(E))\geq cr_{i_1\ldots i_{m-1}}.
\end{equation}It therefore follows from \eqref{intersection} and \eqref{level n separation} that 
\begin{equation}
\label{inclusion}M(\psi_{j^*}^n(F))+a\subset  \varphi_{i_1\ldots i_m}(E)\quad \text{whenever} \quad Diam(M(\psi_{j^*}^n(F)))<cr_{i_1\ldots i_{m-1}}.\end{equation}  For $m\geq 1$ define 
\begin{equation}
\label{s parameter}
s_{m}:=\min\left\{n\in\mathbb{N}:M(\psi_{j^*}^n(F))+a\subset \varphi_{i_1\ldots i_{m}}(E)\right\}.
\end{equation} It follows from \eqref{inclusion} that $s_{m}<\infty$. 

We introduce the notation:
\begin{align*}
\|M\|:&=\max\{|Mv|:|v|=1\}\\
\|M\|':&=\min\{|Mv|:|v|=1\}.\\
\end{align*} By \eqref{s parameter} we have $$\|M\|'(r_{j^*}')^{s_{m}}Diam(F)\leq Diam(M(\psi_{j^*}^{s_m}(F)))\leq Diam(\varphi_{i_1\ldots i_{m}}(E))\leq Diam(E)\cdot r_{i_1\ldots i_{m}}.$$ Therefore 
\begin{equation}
\label{upperbound2}
\frac{(r_{j^*}')^{s_m}}{r_{i_1\ldots i_{m}}}\leq \frac{Diam(E)}{\|M\|'Diam(F)}
\end{equation} for all $m\geq 1$. Similarly we have 
\begin{equation}
\label{lowerbound2}
\frac{(r_{j^*}')^{s_m}}{r_{i_1\ldots i_{m}}}\geq \frac{c\cdot r_{j^*}'}{\|M\|Diam(F)\max\{r_1,r_2\}}
\end{equation}when $s_{m}\geq 1$. Equation \eqref{lowerbound2} follows because if it were to fail then we would have 
$$Diam(M(\psi_{j^{*}}^{s_{m}-1}(F)))\leq \|M\| (r_{j^*}')^{s_{m}-1}Diam(F)<\max\{r_1,r_2\}^{-1}c\cdot r_{i_1\ldots i_m}\leq c\cdot r_{i_1\ldots i_{m-1}}.$$ Which by \eqref{inclusion} would imply $M(\psi_{j^{*}}^{s_{m}-1}(F))+a\subset \varphi_{i_1,\ldots,i_m}(E).$ This would contradict the definition of $s_{m}$. 

It follows from the definition of $s_{m}$ that $$\varphi_{i_1\ldots i_m}^{-1}(M(\psi_{j^{*}}^{s_{m}}(F))+a)\subset E.$$ Letting $Q_m=(O_{i_1}\circ\cdots \circ O_{i_m})^{-1}\circ M$ we have $$r_{i_1\ldots i_m}^{-1}\cdot (r_{j^*}')^{s_m}\cdot Q_{m}(F)+a_m\subset E$$ for some $a_{m}\in \R^d$. Here we used the fact that the rotation component for $\psi_{j^{*}}$ is the identity matrix. Therefore
\begin{equation}
\label{differences}
r_{i_1\ldots i_m}^{-1}\cdot (r_{j^*}')^{s_m}\cdot Q_{m}(F-F)\subset E -E
\end{equation} for $m\geq 1$. Let $v\in F-F$ be a non-zero vector. Such a vector must exists because $F$ is non-trivial. Then by \eqref{differences} we have
\begin{equation}
\label{blah}
r_{i_1\ldots i_m}^{-1}\cdot (r_{j^*}')^{s_m}\cdot Q_{m}v\subset E -E
\end{equation}for all $m\geq 1$. Using the fact that $Q_{m}$ is the composition of some orthogonal matrices with $M$, we see that by taking norms of both sides in \eqref{blah} we have
\begin{equation}
\label{inclusion2}
r_{i_1\ldots i_m}^{-1}\cdot (r_{j^*}')^{s_m}\cdot |Mv|\in\left\{|x-y|:x,y\in E\right\}
\end{equation}for all $m\geq 1$. Let $$U:=\left\{|x-y|:x,y\in E\right\}$$ and $$V:=\left\{r_{i_1\ldots i_m}^{-1}(r_{j^*}')^{s_m}|Mv|:m\geq 1\right\}.$$Consider the map $$f:\left[ \frac{c\cdot r_{j^*}'\cdot|Mv|}{\|M\|Diam(F)\max\{r_1,r_2\}},\frac{Diam(E)\cdot |Mv|}{\|M\|'Diam(F)}\right]\to \T\quad \text{given by}\quad f(x)=\frac{\log x}{\log r_{j^*}'}\mod 1.$$ The map $f$ is Lipschitz. It now follows from \eqref{upperbound2}, \eqref{lowerbound2}, and the well known fact that Lipschitz maps cannot increase the upper box dimension (see \cite{Fal}) that 
\begin{equation*}
\overline{\dim}_{B}f(V)\leq \overline{\dim}_{B}(V)\leq \overline{\dim}_{B}(U)\leq  \overline{\dim}_{B}(E-E)\leq \overline{\dim}_{B}(E\times E)=2\dim_{H}(E).
\end{equation*}Therefore 
\begin{equation}
\label{dimension inequalities}
\frac{\overline{\dim}_{B}f(V)}{2}\leq \dim_{H}(E).
\end{equation}Notice that for any $m\geq 1$ $$f\left(r_{i_1\ldots i_{m+1}}^{-1}r_{j^{*}}^{s_{m+1}}|Mv|\right)-f\left(r_{i_1\ldots i_m}^{-1}r_{j^{*}}^{s_m}|Mv|\right)=-\frac{\log r_{m+1}}{\log r_{j^*}'}\mod 1.$$ By property 2. the IFS $\Phi$ consists of similarities with contraction ratios equal to $r_1$ or $r_2$. Therefore $f(V)$ is an $\alpha\beta$ orbit for $\alpha=-\frac{\log r_1}{\log r_{j}^{*}}$ and $\beta=-\frac{\log r_2}{\log r_{j}^{*}}.$ Applying Theorem \ref{Main theorem} and \eqref{dimension inequalities} we have $$\dim_{H}(E)\geq \frac{1}{2}\left(1-\frac{2(\tau_1-1)}{\tau_2}\right).$$  This completes our proof.

\end{proof}

\end{document}